\documentclass[preprint]{elsarticle}
\biboptions{longnamesfirst,square,semicolon}
\title{On the determinant of hexagonal grids $H_{k,n}$}
\author[AB]{Anna Bie\'n}
\ead{anna.bien@us.edu.pl}
\address{Institute of Mathematics, University of Silesia, Katowice, Poland}

\usepackage[T1]{fontenc} 
\usepackage[bookmarksnumbered,plainpages]{hyperref} 
\usepackage{amsfonts} 
\usepackage[cp1250]{inputenc} 
\usepackage{amsthm}

\newtheorem{tw}{Theorem}

\newtheorem{co}{Corollary}

\begin{document}

\begin{abstract}
We analyse the problem of singularity of graphs for hexagonal grid graphs.
We introduce methods for transforming weighted graph, which do not change the determinant of adjacency matrix. 
We use these methods to calculate the determinant of all hexagonal grid graphs which describe hexagon-shaped benzenoid systems $O(n,1,k)$. The final result is the explicit formula for the determinant of graphs $H_{k,n}$. From the theorem we draw the conclusion, that all graphs of this kind are non-singular.

\end{abstract}

	\begin{keyword}
		hexagonal grid graph \sep weighted singular graph \sep determinant of adjacency matrix
	\end{keyword}
	\maketitle

\section{Introduction}

\emph{Weighted graph} $G$ is a triplet $(V(G), E(G), w^G)$, such that $(V(G),E(G))$ is a simple graph, and $w^G$ is a weight function, which associates certain value with every edge. It is convenient for us to define the weight as a function on the set $E(G) \cup E(\overline{G})$ in such way that $w(e)\neq 0$ iff $e \in E(G)$. $\overline{G}$ denotes the complement graph of $(V(G),E(G))$. 
\sloppy
We identify a simple graph $(V(G),E(G))$ with the weighted graph $(V(G),E(G),w^G)$, where $w^G(e)=1$ iff $e \in E(G)$. We will use the term graph to describe a weighted graph.

If $|V(G)|=n$, then the adjacency matrix \mbox{$A(G)=[a_{i,j}]_{n \times n}$} is defined in the following way:
\fussy
	\begin{displaymath}
	a_{i,j} = \left\{ 
	\begin{array}{ll}
w(v_iv_j) & \textrm{\qquad $i \neq j$} \\
0 & \textrm{\qquad $i=j$}
\end{array} \right.
\end{displaymath} 
If $\hbox{det} A(G)=0$, then the graph $G$ is \emph{singular}.
A graph $H$ is a subgraph of a graph $G$ if $V(H) \subset V(G)$, $E(H) \subset E(G)$, and $w^G (e) = w^H (e)$ for every $e \in E(H)$.
$H$ is an induced subgraph if the underlying simple graph $(V(H),E(H))$ is an induced subgraph of the simple graph $(V(G),E(G))$. 
We similarly use other terms and notation of Diestel \cite{D}.

By removing an edge $e$ from a graph $G$ we mean removing the edge from the set of edges $E(G)$, which is equivalent to changing the weight of this edge to $0$. By removing a vertex $v$ from a graph $G$ we mean removing the vertex from the set of vertices, removing all edges incident to $v$, and restricting the weight function to the set $E(G-v) \cup E(\overline{G-v})$. $G-H$ denotes a graph obtained from the graph $G$ by removing all vertices of the graph $H$.

\section{Preliminaries}

A graph $G$ is \emph{sesquivalent} if all of its components are $1$- or $2$-regular graphs, 
i.e. every component is either a cycle or a path $P_2$. A sesquivalent graph is also called a basic figure.\cite{S} A graph $H$ which is a spanning subgraph of $G$ and a basic figure is called a sesquivalent spanning subgraph.  In literature sesquivalent spanning subgraphs are also called perfect 2-matchings, or Sachs graphs (in chemistry)\cite{G}. $S(G)$ denotes the set of all sesquivalent spanning subgraphs of the graph $G$.

In \cite{H} Harary used the variable determinant of a digraph to prove, that the determinant of the adjacency matrix of a simple graph can be calculated by applying the formula 
\begin{displaymath}
\hbox{det} A(G) =  \sum_{\Gamma \in S(G)} (-1)^{sg(\Gamma)}2^{c(\Gamma)}
\end{displaymath}
where $c(\Gamma)$ is the number of components of $\Gamma$ which are cycles, and $sg(\Gamma)$ is the number of components of $\Gamma$ with even number of vertices. We will use the following notation: $P^2(\Gamma)$ denotes the set of edges of $1$-regular components of $\Gamma$, and $C(\Gamma)$ denotes the set of edges of $2$-regular components of $\Gamma$.

The variable determinant can be applied to calculate the determinant of any weighted graph.
\begin{co}\label{w}
If $G=(V,E,w)$ is a weighted graph, then
\begin{displaymath}
\hbox{det} A(G)= \sum_{\Gamma \in S(G)} \left[ (-1)^{sg(\Gamma)} 2^{c(\Gamma)}\prod_{e \in P^2(\Gamma)} w(e)^2 \prod_{e \in C(\Gamma)} w(e) \right]
\end{displaymath}
\end{co}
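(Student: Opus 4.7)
The plan is to derive the formula directly from the Leibniz expansion
$\det A(G) = \sum_{\sigma \in S_n} \mathrm{sgn}(\sigma)\prod_{i} a_{i,\sigma(i)}$
and then organise the nonzero terms by the sesquivalent spanning subgraph they determine, essentially mimicking Harary's argument but tracking the weights of the participating edges. Since $a_{i,i}=0$, only derangements contribute, and since $a_{i,j}=w(v_iv_j)$ is zero whenever $v_iv_j\notin E(G)$, a permutation $\sigma$ contributes nontrivially exactly when every cycle of $\sigma$ uses only edges of $G$.

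First I would set up the bijective correspondence between nonzero-contributing permutations and pairs $(\Gamma,\alpha)$, where $\Gamma\in S(G)$ and $\alpha$ is an orientation datum. Each transposition $(i\ j)$ in $\sigma$ with $v_iv_j\in E(G)$ corresponds to a $P_2$ component of $\Gamma$ with edge $e=v_iv_j$; conversely, each $P_2$ component of $\Gamma$ arises from a unique transposition. Each cycle $(i_1\,i_2\,\ldots\,i_k)$ of $\sigma$ with $k\geq 3$ uses the edges of a $k$-cycle in $G$, and each such cycle component of $\Gamma$ corresponds to exactly two permutation cycles (the two orientations), explaining the factor $2^{c(\Gamma)}$.

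Next I would compute the product $\prod_i a_{i,\sigma(i)}$ for a fixed $(\Gamma,\alpha)$. A transposition contributes $a_{i,j}a_{j,i}=w(e)^2$, giving the factor $\prod_{e\in P^2(\Gamma)} w(e)^2$, while a directed cycle contributes $\prod_{e\in Z} w(e)$; taking both orientations of the same cycle component gives the same product, so after absorbing the orientation multiplicity into $2^{c(\Gamma)}$ one obtains $\prod_{e\in C(\Gamma)} w(e)$.

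The main delicate step is the sign calculation. A cycle of length $k$ in $\sigma$ has sign $(-1)^{k-1}$, so
\[
\mathrm{sgn}(\sigma) \;=\; \prod_{\text{components } C\text{ of }\Gamma}(-1)^{|C|-1}.
\]
A component contributes $-1$ iff $|C|$ is even, which is precisely what $sg(\Gamma)$ counts (noting in particular that $P_2$ components have $|C|=2$ and are counted by $sg$). Hence the overall sign is $(-1)^{sg(\Gamma)}$, and summing over $\Gamma\in S(G)$ yields the formula. The only subtlety worth double-checking is that $sg(\Gamma)$ as defined in the excerpt (the number of components with an even number of vertices) really accounts for \emph{both} the $P_2$ components and the even cycles, which it does by definition; once that is verified the identity follows.
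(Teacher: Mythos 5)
Your proof is correct and takes exactly the route the paper itself indicates: the paper gives no proof of this corollary, merely citing Cvetkovi\'c's observation that the formula is the Leibniz expansion of $\det A(G)$ reorganised by sesquivalent spanning subgraphs, and your argument is a correct, fully worked-out version of that observation. In particular your sign bookkeeping via $(-1)^{|C|-1}$ per component, counting both the $P_2$ components and the even cycles, matches the paper's definition of $sg(\Gamma)$ as the number of components with an even number of vertices.
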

The above formula was presented in \cite{S} after the observation made by Cvetković \cite{C} that it can be interpreted as an intuitive form of the Leibniz definition of the determinant.
The obvious consequence of this corollary is that we can remove all edges from a graph $G$ which do not lie in any sesquivalent spanning subgraph of $G$, because this operation does not influence the determinant of the graph. We immediately obtain the following conclusion.

\begin{co}
If $v \in V(G)$, $H$ is the only connected sesquivalent subgraph of $G$, such that $v \in V(H)$, then
$\hbox{det} A(G)= \hbox{det} A(H) \cdot \hbox{det} A(G-H)$.
\end{co}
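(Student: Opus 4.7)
The plan is to apply Corollary~\ref{w} to $G$ and exploit the uniqueness hypothesis to set up a bijection $S(G)\to S(G-H)$ defined by $\Gamma\mapsto \Gamma-H$, then verify that the sign, the cycle count, and the weight product all split multiplicatively across this bijection.

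First I would show that every $\Gamma\in S(G)$ contains $H$ as a union of some of its components. Since $\Gamma$ is a spanning basic figure, the connected component $\Gamma_v$ of $\Gamma$ through $v$ is itself a connected sesquivalent subgraph of $G$ containing $v$; by hypothesis, $\Gamma_v=H$. Hence $\Gamma-H\in S(G-H)$, and conversely, for any $\Gamma'\in S(G-H)$ the disjoint union $H\cup\Gamma'$ lies in $S(G)$. So $\Gamma\mapsto\Gamma-H$ really is a bijection $S(G)\to S(G-H)$.

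Next I would note that the components of $\Gamma=H\cup\Gamma'$ are exactly those of $H$ together with those of $\Gamma'$, so $sg(\Gamma)=sg(H)+sg(\Gamma')$ and $c(\Gamma)=c(H)+c(\Gamma')$, while $P^2(\Gamma)$ and $C(\Gamma)$ decompose as the disjoint unions of the corresponding sets for $H$ and for $\Gamma'$. Substituting into Corollary~\ref{w} lets me factor out the $H$-contribution from the sum, giving
$$\det A(G)=W(H)\cdot\det A(G-H),$$
where $W(H)=(-1)^{sg(H)}2^{c(H)}\prod_{e\in P^2(H)}w(e)^2\prod_{e\in C(H)}w(e)$.

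Finally I would identify $W(H)$ with $\det A(H)$. Applying the same reasoning to $H$ itself, any $\Gamma_H\in S(H)$ has its component through $v$ equal to $H$, because that component is also a connected sesquivalent subgraph of $G$ containing $v$; by uniqueness it must coincide with $H$, so $S(H)=\{H\}$ and Corollary~\ref{w} yields $\det A(H)=W(H)$. The only point that will require care is precisely this last step: the uniqueness assumption is made inside $G$ rather than inside $H$, and the argument uses that a sesquivalent subgraph of $H$ is also a sesquivalent subgraph of $G$.
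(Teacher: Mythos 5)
Your proof is correct and follows essentially the same route as the paper, whose entire argument is the one-line observation that $\Gamma \in S(G)$ iff $\Gamma = H \cup \Gamma'$ for some $\Gamma' \in S(G-H)$ --- exactly the bijection you construct. The additional steps you spell out (multiplicativity of $sg$, $c$, and the weight products over the disjoint union, and the identification $S(H)=\{H\}$ so that the factored-out term equals $\det A(H)$) are left implicit in the paper but are the right details to check.
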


\begin{proof}
$\Gamma \in S(G)$ iff $\Gamma=H \cup \Gamma'$, for some $\Gamma' \in S(G-H)$.
\end{proof}

If a graph $G$ has a pedant vertex, then a $P_2$ graph is the only connected sesquivalent subgraph of $G$ to which that pedant vertex belongs. Hence 

\begin{co}\label{ped}
If $v$ is a pedant vertex in a graph $G$, and $u$ is the vertex adjacent to $v$, then
$$\hbox{det} A(G) = \hbox{det} A(G \setminus \{ux: x \in N(u) \wedge u\neq v \})
= -\left[ w^G(vu)\right]^2 \cdot \hbox{det} A(G \setminus \{v,u\})$$
\end{co}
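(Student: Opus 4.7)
The plan is to reduce Corollary \ref{ped} to the two preceding results by tracking how a pendant vertex must be covered inside any sesquivalent spanning subgraph.

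First I would establish the first equality. Since $v$ is pendant, its only neighbour is $u$, so in every $\Gamma \in S(G)$ the vertex $v$ can only be saturated by the edge $uv$. This forces $uv$ to be a $P_2$-component of $\Gamma$, and in particular no other edge incident to $u$ can occur in $\Gamma$. Hence every edge of the set $\{ux : x \in N(u),\ x \neq v\}$ lies outside every $\Gamma \in S(G)$, and by the remark following Corollary \ref{w} these edges may be deleted without changing $\hbox{det} A(G)$.

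Next I would apply the preceding corollary to the graph $G'$ obtained after this deletion. In $G'$ the vertices $u$ and $v$ span an isolated copy of $P_2$: every other edge formerly incident to $u$ has been removed, and $v$ had no other neighbours to begin with. Therefore the $P_2$ on $\{u,v\}$ is the unique connected sesquivalent subgraph of $G'$ containing $v$, and the corollary yields $\hbox{det} A(G') = \hbox{det} A(P_2) \cdot \hbox{det} A(G' - \{u,v\})$.

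Finally I would evaluate the two factors. A direct application of Corollary \ref{w} to the single-edge graph $P_2$ on $\{u,v\}$ with weight $w^G(uv)$ gives $\hbox{det} A(P_2) = -[w^G(uv)]^2$, since the only element of $S(P_2)$ is the whole graph, contributing $sg=1$ and $c=0$. Moreover $G' - \{u,v\} = G \setminus \{u,v\}$, because deleting $u$ removes all edges that we had to discard in the first step. Combining the three ingredients yields the stated formula. The only subtle point is the first step, namely the observation that pendancy of $v$ forces $uv$ into every $\Gamma \in S(G)$ as a $P_2$-component; once this is recorded, the rest is bookkeeping on top of the previous two corollaries.
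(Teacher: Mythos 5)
Your proposal is correct and follows the same route as the paper, which justifies this corollary solely by the observation immediately preceding it: a pendant vertex can only be saturated by its unique incident edge, so that edge is forced as a $P_2$-component of every sesquivalent spanning subgraph, and the factorization corollary then applies. Your write-up simply makes explicit the bookkeeping (deleting the other edges at $u$, computing $\hbox{det} A(P_2)=-[w^G(uv)]^2$) that the paper leaves implicit.
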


As a consequence of the corollary \ref{w} we also obtain formulas for the determinants of weighted paths and cycles.

\begin{co}
If $w$ is a weight function of $P_n=v_1v_2 \dots v_n$, then
\begin{displaymath}
\hbox{det} A(P_n)=\left\{ \begin{array}{ll}
0 & \textrm{if $2 \not| n$} \\
\displaystyle (-1)^{n/2}\prod_{e \in E_1(P_n)} w(e)^2  & \textrm{if $2|n$} 
\end{array} \right.
\end{displaymath}
where $E_1(P_n)=\{v_{i-1}v_{i} \in E(P_n): 2|i \ \wedge \ i \leq n  \}$.
\end{co}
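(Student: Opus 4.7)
The plan is to apply Corollary~\ref{w} directly, since for a path the sesquivalent spanning subgraphs have a completely transparent structure. Because $P_n$ is acyclic, no element $\Gamma \in S(P_n)$ can contain a $2$-regular component, so $C(\Gamma)=\emptyset$ and $c(\Gamma)=0$ for every such $\Gamma$. Hence each term in the sum of Corollary~\ref{w} reduces to $(-1)^{sg(\Gamma)} \prod_{e\in P^2(\Gamma)} w(e)^2$, and every sesquivalent spanning subgraph of $P_n$ is simply a perfect matching of $P_n$.

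First, I would handle the odd case. If $n$ is odd, then $P_n$ has no perfect matching (the two endpoints have degree $1$ and any matching must cover them, but a simple parity/induction argument forces an uncovered vertex in the middle). Thus $S(P_n)=\emptyset$ and the sum is empty, giving $\det A(P_n)=0$.

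Next, for even $n$, the key combinatorial observation is that $P_n$ has exactly one perfect matching, namely $M=\{v_1v_2,\, v_3v_4,\,\ldots,\,v_{n-1}v_n\}=E_1(P_n)$. This can be proved by a quick induction: $v_1$ has only one neighbour, so the edge $v_1v_2$ must be in every perfect matching; removing $v_1,v_2$ leaves $P_{n-2}$, and the inductive hypothesis forces the rest. Thus $S(P_n)=\{M\}$, and for $\Gamma=M$ we have $c(\Gamma)=0$, $|P^2(\Gamma)|=n/2$, and each of the $n/2$ components is a $P_2$ with an even number of vertices, so $sg(\Gamma)=n/2$. Substituting into the formula of Corollary~\ref{w} yields
\[
\det A(P_n) = (-1)^{n/2}\prod_{e\in E_1(P_n)} w(e)^2,
\]
which is exactly the claim.

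The only non-routine step is the uniqueness of the perfect matching of $P_n$ for even $n$; beyond that, the proof is a direct specialization of Corollary~\ref{w}.
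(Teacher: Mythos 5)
Your proposal is correct and follows essentially the same route as the paper: identify $S(P_n)$ explicitly (empty for odd $n$, the single perfect matching $E_1(P_n)$ for even $n$) and plug into Corollary~\ref{w}. You simply supply the routine details (acyclicity forcing $c(\Gamma)=0$, uniqueness of the matching, $sg(\Gamma)=n/2$) that the paper's one-line proof leaves implicit.
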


\begin{proof}
If $n$ is odd, then $S(P_n)=\emptyset$. If $n$ is even, then $S(P_n)=\{\Gamma\}$, where all components of $\Gamma$ are paths $P_2$, and $E(\Gamma)=E_1(P_n)$.
\end{proof}

\begin{co}
If $w$ is a weight function of $C_n=v_1v_2 \dots v_nv_1$, then
\begin{displaymath}
\hbox{det} A(C_n)=\left\{ \begin{array}{ll}
\displaystyle 2 \prod_{e \in E(C_n)} w(e) & \textrm{if $2 \not| n$} \\
\displaystyle - \left[ \prod_{e \in E_1(C_n)} w(e) \ + \prod_{e\in E_2(C_n)}  w(e) \right]^2 & \textrm{if $2|n \wedge 4 \not| n$} \\
\displaystyle \ \left[\prod_{e \in E_1(C_n)} w(e) \ - \prod_{e\in E_2(C_n)}  w(e)\right]^2 & \textrm{if $4|n$}
\end{array} \right.
\end{displaymath}
where $E_1(C_n)=\{v_{i-1}v_{i} \in E(C_n): 2|i \ \wedge \ i \leq n  \}$ and $E_2(C_n)=E(C_n) \setminus E_1(C_n)$.
\newline
\end{co}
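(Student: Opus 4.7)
The strategy is a direct application of Corollary \ref{w}: enumerate $S(C_n)$, evaluate the sign $(-1)^{sg(\Gamma)}$ and the factor $2^{c(\Gamma)}$, and multiply by the appropriate edge-weight products. The key observation is that for a cycle, the sesquivalent spanning subgraphs are very restricted: each component is either a $P_2$ or a cycle, so the only cycle that can appear is $C_n$ itself, and any collection of $P_2$'s must be a perfect matching of $C_n$. Since $C_n$ has exactly two perfect matchings (namely those with edge sets $E_1(C_n)$ and $E_2(C_n)$, alternating around the cycle), this gives a complete classification of $S(C_n)$.

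First I would handle the case $n$ odd. Here $C_n$ admits no perfect matching, so $S(C_n) = \{C_n\}$. Then $c(C_n) = 1$, $sg(C_n) = 0$ (the single component has an odd number of vertices), and $P^2(C_n) = \emptyset$, so Corollary \ref{w} gives $\det A(C_n) = 2 \prod_{e \in E(C_n)} w(e)$, matching the first case.

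For $n$ even, I would split $S(C_n)$ into three members: the cycle $\Gamma_0 = C_n$, and the two matchings $\Gamma_1, \Gamma_2$ with edge sets $E_1(C_n)$ and $E_2(C_n)$. Write $a = \prod_{e\in E_1(C_n)} w(e)$ and $b = \prod_{e\in E_2(C_n)} w(e)$, so that $\prod_{e\in E(C_n)} w(e) = ab$. The contributions are:
\begin{displaymath}
\Gamma_0:\ (-1)^{1}\cdot 2 \cdot ab, \qquad \Gamma_i:\ (-1)^{n/2} \cdot 2^{0} \cdot (\text{corresponding product})^2 \ \text{for } i=1,2,
\end{displaymath}
because $\Gamma_0$ has one even cycle component (so $sg=1$), while each $\Gamma_i$ is a disjoint union of $n/2$ copies of $P_2$ (so $sg = n/2$ and $c = 0$). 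Summing yields $\det A(C_n) = -2ab + (-1)^{n/2}(a^2+b^2)$.

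The final step is to collapse this into the two square-form expressions stated in the corollary. If $4 \mid n$ then $(-1)^{n/2}=1$ and the sum equals $a^2 - 2ab + b^2 = (a-b)^2$; if $n \equiv 2 \pmod 4$ then $(-1)^{n/2}=-1$ and the sum equals $-(a^2+2ab+b^2) = -(a+b)^2$. There is no real obstacle here beyond keeping the parity bookkeeping straight; the only subtlety is verifying that $C_n$ has exactly two perfect matchings when $n$ is even and none when $n$ is odd, which is immediate by following a prescribed alternation around the cycle.
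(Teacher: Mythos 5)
Your proposal is correct and follows essentially the same route as the paper's own proof: identify $S(C_n)$ as $\{C_n\}$ for odd $n$ and as the cycle plus the two perfect matchings $\Gamma_1,\Gamma_2$ with edge sets $E_1(C_n),E_2(C_n)$ for even $n$, apply Corollary \ref{w}, and collapse the sum into $-(a+b)^2$ or $(a-b)^2$ according to the parity of $n/2$. The sign and component bookkeeping matches the paper exactly.
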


\begin{proof}
If $2\not| n$, then $S(C_n)=\{C_n\}$, and  
\begin{displaymath} 
\hbox{det}A(C_n)=  2 \prod_{e \in E(C_n)} w(e) 
\end{displaymath}  
If $2|n$, then $S(G)=\{C_n, \Gamma_1, \Gamma_2 \}$, where $E(\Gamma_1)=E_1(C_n)$ and $E(\Gamma_2)=E_2(C_n)$.
Hence, 
\begin{displaymath} 
\hbox{det}A(C_n)= \left[ -2  \prod_{e \in E(C_n)} w(e) \right]  + 
\left[ (-1)^{sg(\Gamma_1)}  \prod_{e \in P^2(\Gamma_1)} w(e)^2  \right] +
\left[ (-1)^{sg(\Gamma_2)}  \prod_{e \in P^2(\Gamma_2)} w(e)^2  \right] \end{displaymath}

If $4\not|n$, then 
\begin{displaymath} 
\hbox{det}A(C_n)=\left[- 2  \prod_{e \in E(C_n)} w(e) \right] + 
\left[ - \prod_{e \in E_1(C_n)} w(e)^2  \right] +
\left[ - \prod_{e \in E_2(C_n)} w(e)^2  \right] 
\end{displaymath} 
$$= - \left[ \prod_{e \in E_1(C_n)} w(e) \ + \prod_{e\in E_2(C_n)}  w(e) \right]^2 $$

If $4|n$, then 

\begin{displaymath} 
\hbox{det}A(C_n)=\left[- 2 \prod_{e \in E(C_n)} w(e) \right]+ 
\left[ \prod_{e \in E_1(C_n)} w(e)^2 \right] +
 \left[\prod_{e \in E_2(C_n)} w(e)^2  \right] 
\end{displaymath} 
$$= \left[\prod_{e \in E_1(C_n)} w(e) \ - \prod_{e\in E_2(C_n)}  w(e)\right]^2$$

\end{proof}

\section{Graphs $G_{v+c \cdot u}$}

Assume, that $v,u$ are distinct, non-adjacent vertices of a graph $G$, $c\neq 0$ is a real number. We define the graph $G_{v+ c \cdot u}$
in the following way \\

$V(G_{v+c \cdot u}):=V(G)$,\\
\begin{displaymath}	
w^{G_{v+c \cdot u}}(ab)=\left\{ \begin{array}{ll}
w^G(vb)+c \cdot w^G(bu)  & \textrm{if $a=v$} \\
w^G(ab) & \textrm{if  $a \neq v \neq b$}
\end{array} \right.
\hspace{3cm}
\end{displaymath} 

$e \in E(G_{v+c \cdot u})\ \textrm{iff}\ w^{G_{v+c \cdot u}}(e) \neq 0$.\\

If $G$ is a simple graph and $N_G(u) \subset N_G(v)$, then $G_{v-u}$ is also a simple graph and the operation of subtracting vertex $u$ from the vertex $v$ corresponds to the operation of removing certain edges from a graph presented in \cite{R}.
 
Notice, that if we multiply the column of the matrix $A(G)$ corresponding to the vertex $u$ by $c$ and add it to the column corresponding to the vertex $v$ and if we do the same with appropriate rows, we obtain the matrix $A(G_{v+c \cdot u})$.
Hence

\begin{tw}
	 	If $x_1$, $x_2$ are distinct and non-adjacent vertices of a graph $G$, $c \neq 0$ then 
	 	$\hbox{det} A(G)=\hbox{det} A(G_{x_1+c \cdot x_2}).$
\end{tw}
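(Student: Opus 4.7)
The plan is to realize the passage from $G$ to $G_{x_1+c\cdot x_2}$ by two elementary operations on $A(G)$: the row operation $R_{x_1}\leftarrow R_{x_1}+c\cdot R_{x_2}$ followed by the column operation $C_{x_1}\leftarrow C_{x_1}+c\cdot C_{x_2}$. Each preserves the determinant, so the task reduces to identifying the matrix obtained after both operations with $A(G_{x_1+c\cdot x_2})$ entry by entry. This is exactly the observation made by the author in the paragraph preceding the theorem; the job is to write it out carefully.

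After the row operation, the entry at position $(x_1,b)$ becomes $w^G(x_1 b)+c\cdot w^G(x_2 b)$, which matches $w^{G_{x_1+c\cdot x_2}}(x_1 b)$ from the definition. The subsequent column operation performs the symmetric update in column $x_1$, producing the symmetric matrix expected for an undirected weighted graph. Entries $(a,b)$ with $a,b\neq x_1$ are untouched and already agree with $w^G$, so on these positions the new matrix coincides with $A(G_{x_1+c\cdot x_2})$ automatically.

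The only place the hypothesis is genuinely used, and the main subtlety, is verifying that the diagonal entry $(x_1,x_1)$ stays zero. After the row operation it becomes $c\cdot w^G(x_2 x_1)$, and the column operation then adds a further $c\cdot w^G(x_1 x_2)$ (the $(x_1,x_2)$ entry is unchanged by the row op, since $A(G)_{x_2 x_2}=0$), for a total of $2c\cdot w^G(x_1 x_2)$. This vanishes precisely because $x_1$ and $x_2$ are non-adjacent; without the hypothesis the diagonal entry would be nonzero and the new matrix would fail to be the adjacency matrix of any simple weighted graph in the sense of the paper. Once this single check is in place, the identification with $A(G_{x_1+c\cdot x_2})$ is complete, and the determinant-invariance of the two row/column operations gives $\det A(G)=\det A(G_{x_1+c\cdot x_2})$.
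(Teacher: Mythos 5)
Your proposal is correct and follows exactly the paper's own argument: the paper proves this theorem by the single observation that the simultaneous row and column operations $R_{x_1}\leftarrow R_{x_1}+c\,R_{x_2}$, $C_{x_1}\leftarrow C_{x_1}+c\,C_{x_2}$ transform $A(G)$ into $A(G_{x_1+c\cdot x_2})$ while preserving the determinant. Your write-up merely makes explicit the entrywise verification (including the diagonal check that uses non-adjacency) that the paper leaves implicit.
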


\section{Hexagonal grid graphs}

Let us consider a disjoint union $G$ of $m+1$ simple paths, such that \\

$P^i_{2n+2}=v^i_1v^i_2 \dots v^i_{2n}v^i_{2n+1}v^i_{2n+2}$ \hspace{.5cm} for $i \in \{1,2, \dots , m \}$, and \\ 

$P^{m+1}_{2n}=v^{m+1}_2v^{m+1}_3 \dots v^{m+1}_{2n+1}$.\\

We construct the graph $H_{n,m}$ from $G$ in the following way:\\
$V(H_{n,m})=V(G)$, and\\ 
$e \in E(H_{n,m})$ iff
\begin{displaymath}	
\begin{array}{lll}
e \in E(G) & \textrm{or} & e=v^{i+1}_2v^i_1, \hspace{.2cm} \textrm{ for $i \in \{1,2, \dots ,m \}$}  \\ 
\textrm{ or } & & \\
e=v^{i+1}_{2k+1}v^i_{2k+2}, & \textrm{for} & k\in \{1,2, \dots, n\}, i \in \{1,2, \dots ,m \}.  \\ 
\end{array}
\end{displaymath}

We call the $H_{n,m}$ graph a \emph{hexagonal grid}.
\begin{figure} 
\includegraphics[scale=0.30]{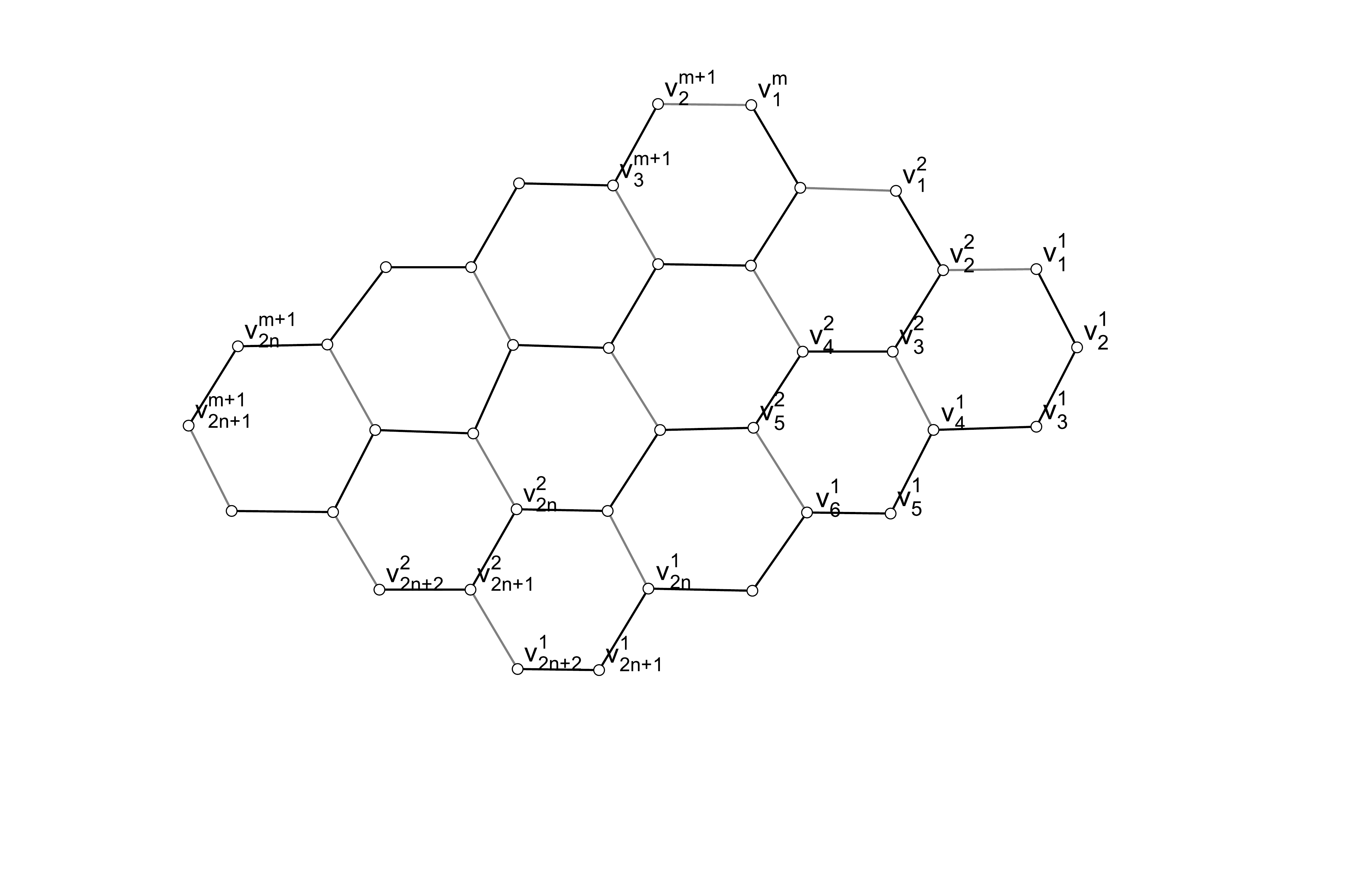}
\caption{Hexagonal grid $H_{n,m}$}
\label{he}
\end{figure}
In the figure \ref{he}, which presents the graph $H_{n,m}$, the parameters $m,n$ denote the number of cycles $C_6$ at the respective side of $H_{n,m}$.
The graph $H_{n,m}$ has a structure of a general hexagon-shaped benzenoid system $O(m,1,n)$.

Hexagonal chains $H_{n,1}$ are non-singular graphs, and \mbox{$\hbox{det} A(H_{n,1})=-(1+n)^2$} \cite{Y},\cite{G}.
We will prove a formula for calculating the determinant of any hexagonal grid $H_{n,m}$.

\begin{tw}

Let $H_{n,m}$ be a hexagonal grid $H_{n,m}$ with a weight function, such that 

($\ast$) $w(v^1_{2i}v^1_{2i+1})=\frac{x+i}{i}$ for $x \in \mathbb{N}$ and every $i \in \{1, \dots, n \}$. Weights of the other edges are equal to 1. (see fig. \ref{d1})\\

Then 
$$\hbox{det} A(H_{n,m})=(-1)^{nm+n+m}\left[ {x+n+m \choose n}\right]^2$$

\end{tw}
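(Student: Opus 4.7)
I would prove the formula by induction on $n$, the horizontal extent of the grid, with the base case $n=1$ handled by a secondary induction on $m$. The main tool throughout is a cascade of operations $G_{v+c\cdot u}$ from Section~3 (which preserve the determinant) combined with the pendant-vertex reduction of Corollary \ref{ped}.

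\textbf{Base case} ($n=1$). The graph $H_{1,m}$ is a vertical chain of $m$ hexagons with a single weighted edge $v^1_2v^1_3$ of weight $x+1$, and the goal is $\det A(H_{1,m})=-(x+m+1)^2$. For $m=1$, $H_{1,1}$ is a single $C_6$ and the cycle formula from the preliminaries settles it. For $m\ge 2$, I would apply $G_{v+c\cdot u}$ operations to the bottommost hexagon (the most natural choice is an operation that turns a degree-$2$ vertex of $P^{m+1}$ into a pendant), then eliminate the pendant via Corollary \ref{ped}, then perform one cleanup operation. The net effect should be to replace $H_{1,m}$ with parameter $x$ by $H_{1,m-1}$ with parameter $x+1$ (with the same determinant), so that iterating reduces to $H_{1,1}$ with parameter $x+m-1$, which is directly known.

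\textbf{Inductive step} ($n\ge 2$). Given the formula for $H_{n-1,m}$, I would peel off the rightmost column of $m$ hexagons row by row from top to bottom. For each row $i$: (a) apply $G_{v^i_{2n+1}-v^{i+1}_{2n+1}}$, which zeroes the edge $v^i_{2n+1}v^i_{2n+2}$ and leaves $v^i_{2n+2}$ as a pendant; (b) delete the pendant using Corollary \ref{ped}, picking up a factor of $-1$; (c) apply a second operation $G_{?+c_i\cdot ?}$ with a scalar $c_i$ chosen to cancel the auxiliary edges generated in (a) and to restore the hexagonal structure needed to process the next row. After all $m$ rows are processed, the graph becomes $H_{n-1,m}$ with only one modified rung weight; a concluding calculation should show that the accumulated multiplicative factor is $(-1)^{m-1}\bigl(\tfrac{x+n+m}{n}\bigr)^2 = \binom{x+n+m}{n}^2/\binom{x+n+m-1}{n-1}^2$, which combined with the inductive hypothesis yields the claim.

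\textbf{Main obstacle.} The hard part is step (c): the scalars $c_i$ must be chosen so that after traversing all $m$ rows the cumulative weight modifications telescope to the clean value $\frac{x+n+m}{n}$ on the surviving rightmost rung. This telescoping relies essentially on the specific form $\frac{x+i}{i}=\binom{x+i}{i}/\binom{x+i-1}{i-1}$ of the weights $(\ast)$ and on the identity $\prod_{i=1}^{k}\frac{x+i}{i}=\binom{x+k}{k}$; without these the reduction would not close cleanly, which is presumably why the theorem must be stated for this particular weight scheme rather than for arbitrary weights on $P^1$.
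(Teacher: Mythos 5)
Your toolkit is the right one, and your arithmetic is internally consistent: the ratio of the target formula for $H_{n,m}$ to the one for $H_{n-1,m}$ is indeed $(-1)^{m+1}\left(\frac{x+n+m}{n}\right)^2$, which agrees with the factor you claim to accumulate. Nevertheless the proposal has a genuine gap, and in fact the cascade as literally described breaks after the first row. In step (a) for row $i$ you apply $G_{v^i_{2n+1}-v^{i+1}_{2n+1}}$; this does kill the edge $v^i_{2n+1}v^i_{2n+2}$ and make $v^i_{2n+2}$ pendant, but the unique remaining neighbour of $v^i_{2n+2}$ is $v^{i+1}_{2n+1}$, so the reduction of Corollary \ref{ped} in step (b) removes (or at best isolates inside a $P_2$) precisely the vertex $v^{i+1}_{2n+1}$ that step (a) for row $i+1$ needs as its subtrahend. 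Step (a) also creates new edges $v^i_{2n+1}v^{i+1}_{2n}$ and $v^i_{2n+1}v^{i+1}_{2n+2}$ of weight $-1$, so after one pass the rightmost column is no longer a column of hexagons. Your step (c) is supposed to repair all of this, but you leave both the vertices and the scalars $c_i$ unspecified and explicitly flag this as the main obstacle; since that is exactly where the proof lives, the argument is not complete. For calibration: in the paper one such repair is not a single operation but a block of three operations per position, two of them with non-unit scalars of the form $\frac{x+k+2}{k+2}$ and $\frac{x+1}{k+1}$, and the induced weights must be tracked through an inner induction along the row.

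It is also worth noting that the paper inducts in the other direction. It peels off the entire top row $P^1$ --- the row carrying the weights $(\ast)$ --- walking horizontally along it with those three-operation blocks, and shows that $H_{n,m}$ reduces to $n+1$ disjoint $P_2$'s together with a copy of $H_{n,m-1}$ satisfying $(\ast)$ with $x$ replaced by $x+1$; the induction is on $m$, with base case $m=1$, where the residue is a weighted path whose unique sesquivalent spanning subgraph yields $-\left[\frac{(n+1+x)!}{(1+x)!\,n!}\right]^2$ directly. Your $n=1$ base case is essentially this row-peeling specialized to a single column, so the natural repair of your argument is to adopt that direction throughout: peel the weighted row rather than the unweighted rightmost column, since the shift $x\mapsto x+1$ is what makes the weights telescope into ${x+n+m \choose n}$.
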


\begin{proof}

\begin{figure} 
\includegraphics[scale=0.30]{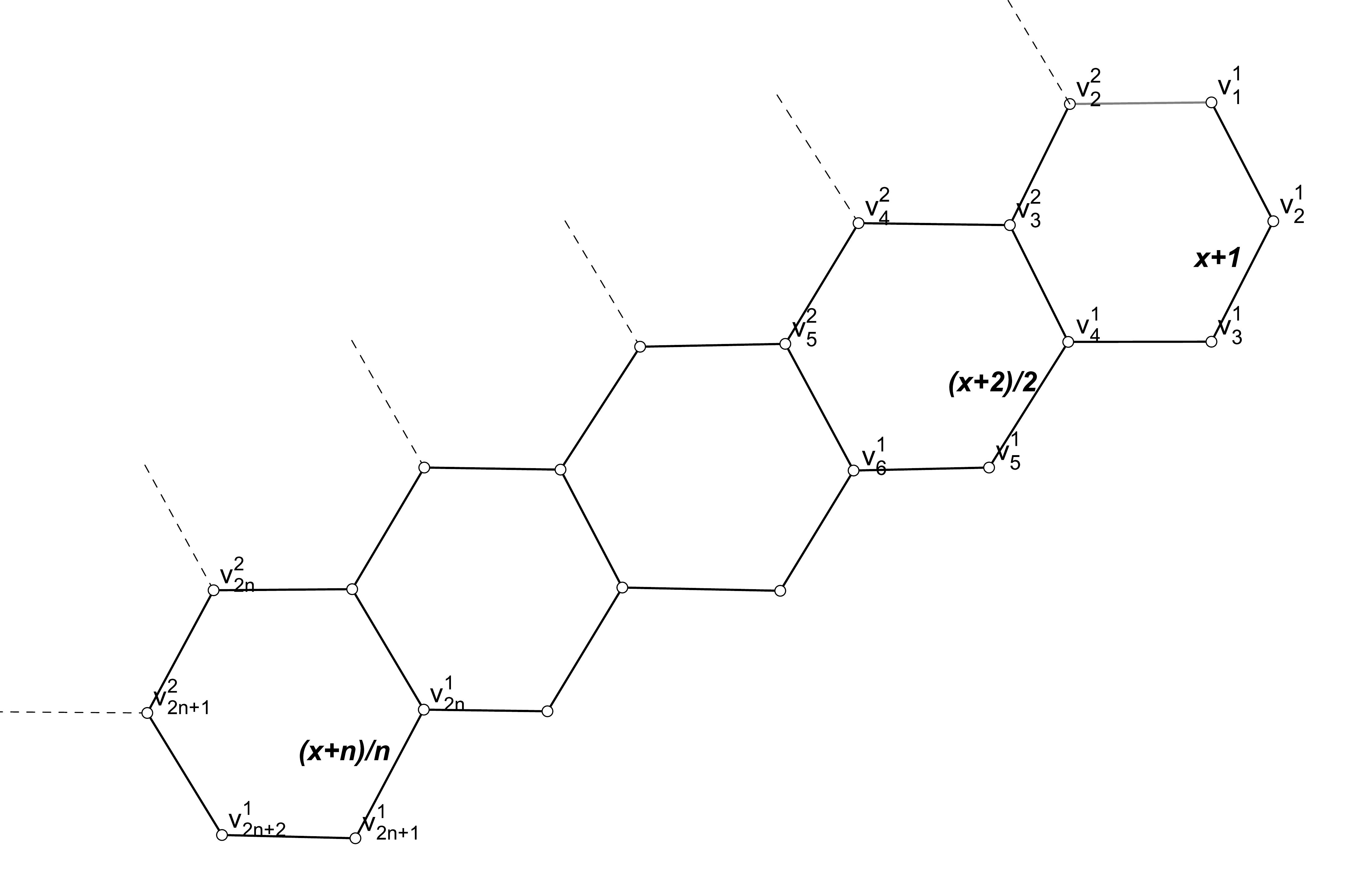}
\caption{$H_{n,m}$}
\label{d1}
\end{figure}

Let us consider $(H_{n,m})_{v^2_2-v^1_2}$. 
In this graph $v^1_1$ is a pending vertex. According to corollary \ref{ped}, the edge adjacent to its neighbour can be removed.
If  $$G_0=((H_{n,m})_{v^2_2-v^1_2})\setminus \{v^1_2v^1_3 \}$$ then 
$w^{G_0}(v^2_2v^1_3)=-(x+1)$.
In graph $$G_1=(G_0)_{v^2_3-v^1_3}$$ we have $w^{G_1}(v^2_2v^2_3)=2+x$.  
Let us consider the graph $(G_1)_{v^1_5-\frac{x+2}{2}\cdot v^1_3}$. 
For $v^1_4$ is a pedant vertex in this graph, we can apply the corollary \ref{ped}.
Let $$G_2=(G_1)_{v^1_5-\frac{x+2}{2}\cdot v^1_3} \setminus \{v^1_3v^2_2 \}$$ then $w^{G_2}(v^2_2v^1_5)=\frac{(x+1)(x+2)}{2}$.
Now we construct the graph $$G_3=(G_2)_{v^1_5-\frac{x+1}{2} \cdot v^2_3}$$ 
We get $w^{G_3}(v^2_4v^1_5)=-\frac{x+1}{2(2+x)}$. 
\begin{figure} 
\includegraphics[scale=0.30]{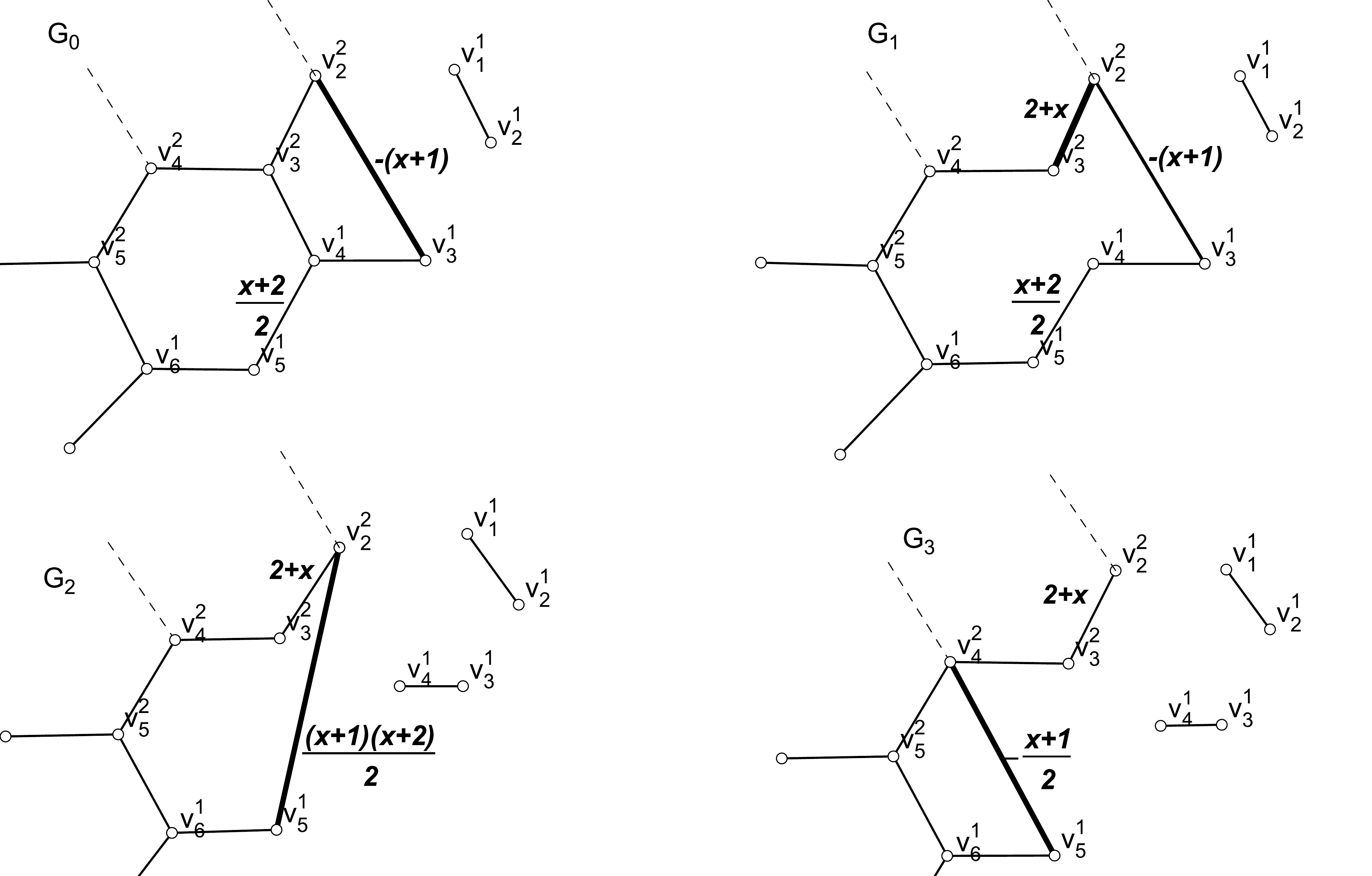}
\caption{Reduction of $H_{n,m}$}
\label{d2}
\end{figure}

Let us assume, that we constructed graphs $G_{3k-2}$, $G_{3k-1}$ and $G_{3k}$, for some $k\in \{1,\dots , n-2  \}$, and  
($\ast \ast$) $G_{3k}$ is a graph, which can be obtained from $H_{n,m}$ by 
\\
- removing the edges $v^1_{2i}v^1_{2i+1}$ for $1 \leq i \leq k+1$; $v^1_2iv^2_{2i-1}$ for $2 \leq i \leq k+1$,\\ and $v^1_1v^2_2$, \\
- setting the weights: 
$w(v^2_{2i}v^2_{2i+1})=\frac{x+1+i}{i}$ for $1 \leq i \leq k-1$,\\
- adding a new edge with the weight $w(v^2_{2k+2}v^1_{2k+3})=-\frac{x+1}{k}$.

Let $$G_{3k+1}=(G_{3k})_{v^2_{2k+3}-v^1_{2k+3}}$$ then $w^{G_{3k+1}}(v^2_{2k+2}v^2_{2k+3})=\frac{k+1+x}{k}$.  
Now we construct the graph $$(G_{3k+1})_{v^1_{2k+5}-\frac{x+k+2}{k+2}\cdot v^1_{2k+3}}$$ 
For $v^1_{2k+4}$ is a pedant vertex in this graph, we can apply the corollary \ref{ped}.
Let $$G_{3k+2}=(G_{3k+1})_{v^1_{2k+5}-\frac{x+k+2}{k+2}\cdot v^1_{2k+3}} \setminus \{v^1_{2k+3}v^2_{2k+2} \}$$ $w^{G_2}(v^2_{2k+2}v^1_{2k+5})=\frac{(x+1)(x+k+1)}{k(k+1)}$.
Now we construct the graph $$G_{3k+3}=(G_{3k+2})_{v^1_{2k+5}-\frac{x+1}{k+1)} \cdot v^2_{2k+3}}$$  
We get $w^{G_{3k+3}}(v^2_{2k+4}v^1_{2k+5})=-\frac{x+1}{k+1}$.

From the mathematical induction we conclude, that for every $k<n$, $G_{3k}$ is a graph satisfying conditions ($\ast \ast$).
Notice, that $v^1_{2n+2}$ is a pending vertex in the graph $G_{3(n-1)+1}$, and if $n>1$, then $F=G_{3(n-1)+1} \setminus \{v^1_{2n+1}v^2_{2n}\}$
is a disjoint union of $n+1$ simple $P_2$ paths and a weighted hexagonal grid $H_{n,m-1}$, such that  $w(v^2_{2i}v^2_{2i+1})=\frac{x'+i}{i}$ for every $i \in \{1, \dots, n \}$ and $x'=x+1$.

\begin{figure} 
\includegraphics[scale=0.30]{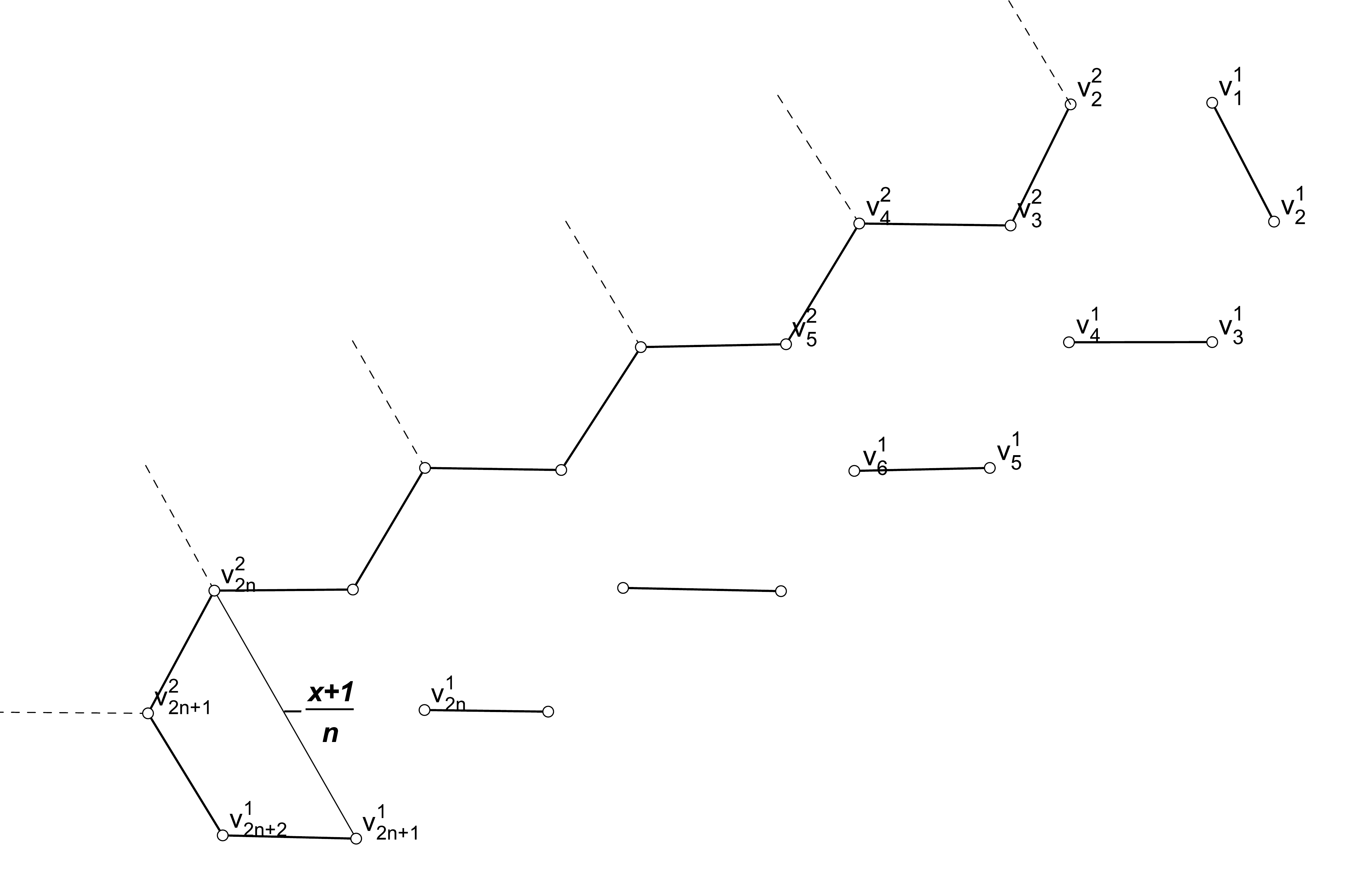}
\caption{$G_{3(n-1)}$}
\label{d7}
\end{figure}

If $m=1$ then $F$
is a disjoint union of $n+1$ simple $P_2$ paths and a weighted path $P_{2n}=v^2_2v^2_3 \dots v^2_{2n+1}$, such that $w(v^2_{2i}v^2_{2i+1})=\frac{x'+i}{i}$  for every $i \in \{1, \dots, n \}$ and  $x'=x+1$. In this case the only sesquivalent spanning subgraph is a disjoint union of $2n+1$ paths $P_2$, and hence $$\hbox{det} A(H_{n,1})= - \left[\frac{(n+1+x)!}{(1+x)!n!}\right]^2$$

Assume that if a hexagonal grid $H_{n,m-1}$ satisfies the conditions ($\ast$), then
$$\hbox{det} A(H_{n,m-1})= (-1)^{(m-1)n+n+(m-1)} \left[\frac{(n+m-1+x)!}{n!(x+m-1)!}\right]^2$$

We re-label each vertex of F from $v^i_k$ to $v^{i-1}_k$. The component of F which is a hexagonal grid satisfies the conditions ($\ast$).
Hence $$\hbox{det} A(F)=(-1)^{n+1} \cdot (-1)^{mn+m-1} \left[\frac{(n+m-1+x')!}{n!(x'+m-1)!}\right]^2$$
We obtain $$\hbox{det} A(F)=(-1)^{mn+n+m} \left[\frac{(n+m+x)!}{n!(x+m)!}\right]^2$$
For the reductions do not change the determinant of the graph, we conclude, that 
$$\hbox{det} A(H_{n,m})=(-1)^{nm+n+m}\left[ {x+n+m \choose n}\right]^2$$ 

\end{proof}

\begin{tw}
If $k,n \geq 1$, then $$\hbox{det} A(H_{n,k})= (-1)^{kn+n+k} \left[{n+k \choose n}\right]^2$$
\end{tw}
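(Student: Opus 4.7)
The plan is to obtain this as an immediate specialization of the preceding theorem at $x=0$, identifying its parameter $m$ with $k$ here. When $x=0$, the prescribed weight $w(v^1_{2i}v^1_{2i+1}) = \frac{0+i}{i}$ equals $1$ for every $i$, and since all other edges of the weighted $H_{n,m}$ already carry weight $1$ under condition ($\ast$), the weighted graph considered there coincides, as a weighted graph, with the unweighted hexagonal grid $H_{n,k}$. Substituting $x=0$ into $(-1)^{nm+n+m}\binom{x+n+m}{n}^2$ and renaming $m$ as $k$ yields exactly $(-1)^{kn+n+k}\binom{n+k}{n}^2$.

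The only point that requires verification is that $x=0$ is admissible. If $\mathbb{N}$ is taken to include $0$ (as is common in the Polish tradition), there is nothing to check. Otherwise I would invoke a polynomial-identity argument: by corollary \ref{w}, the determinant $\hbox{det}\,A(H_{n,m})$, with the weights of ($\ast$) viewed as functions of $x$, is a polynomial in $x$ agreeing with $(-1)^{nm+n+m}\binom{x+n+m}{n}^2$ on every positive integer, hence agreeing at $x=0$ as well. Alternatively, one may simply re-run the reductions of the previous proof at $x=0$: every coefficient $c$ appearing in a vertex-addition operation $G_{y+c\cdot z}$ has the form $\frac{x+a}{b}$ with $a,b\geq 1$ and therefore remains nonzero at $x=0$, every intermediate edge weight produced along the way factors as $\pm\frac{(x+a_1)(x+a_2)}{b}$ with positive $a_i$ and hence does not vanish, and the base case $m=1$ evaluates to $-\bigl[\tfrac{(n+1)!}{n!}\bigr]^2 = -\binom{n+1}{n}^2$.

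I do not anticipate a genuine obstacle: the auxiliary parameter $x$ was introduced in the preceding theorem precisely so that the inductive step on $m$ would reproduce a shifted instance of the same weighted form (the re-labelling $x' = x+1$ in the induction), and the statement above is the intended unweighted specialization, from which non-singularity of every $H_{n,k}$ is then immediate since $\binom{n+k}{n}^2 > 0$.
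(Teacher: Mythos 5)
Your proposal is correct and matches the paper's (implicit) argument: the paper states this theorem without proof, as the immediate $x=0$, $m=k$ specialization of the preceding weighted theorem, which is exactly what you do. Your extra care about whether $x=0$ is admissible in $\mathbb{N}$ (and the polynomial-identity fallback) is a sensible precaution but not a departure from the paper's route.
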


\textbf{References:}


\begin{thebibliography}{99}
			
	\bibitem{B} Bień A., The problem of singularity for planar grids, Discrete Math., 2011, 311, 921–931
	\bibitem{C} Cvetković D.M., The determinant concept defined by means of graph theory, Math. Vesnik 12, 1975 (27), 333-336.			
	\bibitem{S} Cvetković D.M., Doob M., Sachs H., Spectra of Graphs: Theory and Application, 2nd. ed., VEB Deutscher Verlag der Wissenschaften, Berlin, 1982 	
		
	\bibitem{D} Diestel R., Graph Theory, 3rd. ed., Grad. Texts in Math., 173, Springer-Verlag Heidelberg, New York, 2005
			
	\bibitem{G} Gutman I., Characteristic and matching polynomials of benzenoid hydrocarbons, Journal of the Chemical Society Faraday Transactions II, 1983, 79, 337–345
	
	\bibitem{GB} Gutman I., Borobićanin B., Nullity of graphs: an updated survey, in: D. Cvetković, I. Gutman (Eds.) \emph{Selected Topics on Applications of Graph Spectra}, Math. Inst., Belgrade, 2011, 137–154

	\bibitem{H} Harary F., The determinant of the adjacency matrix of a graph, SIAM Rev. 4, 1962, 202-210. 
		
		
	\bibitem{R} Rara H.M., Reduction procedures for calculating the determinant of the adjacency matrix of some graphs and the singularity of square planar grids, Discrete Math., 1996, 151, 213-219			
		\bibitem{Y} Yang Y., ZHANG H., Kirchhoff index of linear hexagonal chains, International Journal of Quantum Chemistry, 
		2008, 108(3), 503–512
																				
	\end{thebibliography}
\end{document}